\newtheorem{theorem}{Theorem}[section]
\newtheorem{remark}{Remark}[section]
\newtheorem{example}{Example}[section]
\newtheorem{lemma}{Lemma}[section]
\newtheorem{proposition}{Proposition}[section]
\numberwithin{equation}{section}
\begin{document}

\title[Besov-Dunkl spaces connected with generalized Taylor formula]{Besov-Dunkl spaces connected with generalized Taylor formula on the real line}

\author[C. Abdelkefi \and F. Rached]{Chokri Abdelkefi* \and Faten Rached**}
\newcommand{\acr}{\newline\indent}
\address{\llap{*\,}
Department of Mathematics\acr Preparatory Institute of Engineer
Studies of Tunis \acr 1089 Monfleury Tunis, University of Tunis\acr
Tunisia} \email{chokri.abdelkefi@yahoo.fr}
\address{\llap{**\,}
Department of Mathematics\acr Preparatory Institute of Engineer
Studies of Tunis \acr 1089 Monfleury Tunis, University of Tunis\acr
Tunisia}
\email{rached@math.jussieu.fr}
\thanks{This work was completed with the support of the DGRST research project LR11ES11,
University of Tunis El Manar, Tunisia.}

\subjclass{Primary 44A15, 46E30; Secondary 44A35.} \keywords{Dunkl
operator, Dunkl transform, Dunkl translation operators, Dunkl
convolution, Generalized Taylor formula, Besov-Dunkl spaces.}
\begin{abstract}
In the present paper, we define for the Dunkl tranlation operators
on the real line, the Besov-Dunkl space of functions for which the
remainder in the generalized Taylor's formula has a given order. We
provide characterization of these spaces by the Dunkl convolution.
\end{abstract}
\maketitle
\section{Introduction}
\label{intro} There are many ways to define the Besov spaces (see
\cite{An,Bes,Pe}) and the Besov-Dunkl spaces (see
\cite{ab1,ab2,ab3}). It is well known that Besov spaces can be
described by means of differences using the modulus of continuity of
functions and that they can be also defined, for instance in terms
of convolutions with different kinds of smooth functions.

Inspired by the work of L\"{o}fstr\"{o}m and Peetre (see \cite{L.P})
where they described for generalized tranlations, the space of
functions for which the remainder in the generalized Taylor's
formula has a given order, we define in this paper the Besov-type
space of functions associated with the Dunkl operator on the real
line, that we call Besov-Dunkl spaces of order $k$ for $k=1,2,...,$.
Before, we need to recall some results in harmonic analysis related to the
Dunkl theory.\\

For a real parameter $\alpha > -\frac{1}{2}$, the Dunkl operator on
$\mathbb{R}$ denoted by $\Lambda_{\alpha}$, is a
differential-difference operator introduced in 1989 by C. Dunkl in
\cite{dun}. This operator is associated with the reflection group $
\mathbb{Z}_{2}$ on $\mathbb{R}$ and can be considered as a
perturbation of the usual derivative by reflection part. The
operator $\Lambda_{\alpha}$ plays a major role in the study of
quantum harmonic oscillators governed by Wigner's commutation rules
(see \cite{rose}). The Dunkl kernel $E_{\alpha}$ related to
$\Lambda_{\alpha}$ is used to define the Dunkl transform
$\mathcal{F}_{\alpha}$ which enjoys properties similar to those of
the classical Fourier transform. The Dunkl kernel $E_{\alpha}$
 satisfies a product formula (see \cite{ro1}). This allows us to define
the Dunkl translation $\tau_{x}$, $x\in\mathbb{R}$. As a result, we
have the Dunkl convolution $\ast_\alpha$ (see next section).

The classical Taylor formula with integral remainder was extended to
the one dimensional Dunkl operator $\Lambda_{\alpha}$ (see
\cite{mou}): for $k=1,2,...,$ $f \in \mathcal{E}(\mathbb{R})$ and $
a \in \mathbb{R}$, we have
\begin{eqnarray*} \tau _x(f)(a) = \sum_{p=0}^{k-1} b_p(x) \Lambda_\alpha^p f(a) +  R_k(x,f)(a),\quad x \in \mathbb{\mathbb{R}}\backslash\{0\} ,\end{eqnarray*}
with $R_k(x,f)(a)$ is the integral remainder of order $k$ given by
 \begin{eqnarray*} \displaystyle R_k(x,f)(a)= \int_{-|x|}^{|x|} \Theta_{k-1} (x,y) \tau_y (\Lambda_\alpha^{k} f)(a) A_\alpha(y) dy,\end{eqnarray*}
 where $\mathcal{E}(\mathbb{R})$ is the space of infinitely
differentiable functions on $\mathbb{R}$ and $ (\Theta_{p})_{p\in
\mathbb{N}}$, $(b_p)_{p\in \mathbb{N}}$ are two sequences of
 functions constructed inductively from the function $A_\alpha$ defined on $\mathbb{R}$ by $A_\alpha(x)=
 |x|^{2\alpha+1}$ (see next section).\\

Now, we introduce the following weighted function spaces: Let $k=1, 2,...,$ $0<\beta <1$, $1 \leq p < +\infty $ and $ 1 \leq q \leq +\infty.$
\\$\bullet$ We denote by
$L^p(\mu_\alpha)$ the space of complex-valued functions $f$,
measurable on $\mathbb{R}$ such that
$$\|f\|_{p,\alpha} = \left(\int_{ \mathbb{R}}|f(x)|^p
d\mu_\alpha(x) \right)^{1/p} < + \infty,$$ where $\mu_\alpha$ is a
weighted Lebesgue measure associated with the Dunkl operator (see
next section).
\\$\bullet$ (Besov-Dunkl spaces of order $k$)
$ \mathcal{B}^k\mathcal{D}_{p,q}^{\beta,\alpha}$ denote the subspace
of functions $f\in\mathcal{E}(\mathbb{R})$ such
that $\Lambda_\alpha^{k-1}f$ are in $L^p(\mu_\alpha)$ and satisfying
\begin{eqnarray*} \int_0^{+\infty} \Big(\frac{\omega_{p,\alpha}^k(x,f)}{x^{\beta+k-1}}\Big)^q
 \frac{dx}{x} < +\infty  &if& \ \  q < +\infty \\
\mbox{and} \qquad
    \sup_{x > 0} \frac{\omega_{p,\alpha}^k(x,f)}{x^{\beta+k-1}} < +\infty   &if& \ \  q =
  +\infty,\end{eqnarray*}
with $ \omega_{p,\alpha}^k(x,f) =\displaystyle \big\| R_{k-1}(x,
f)+R_{k-1}(-x,
f)-\big(b_{k-1}(x)+b_{k-1}(-x)\big)\Lambda_\alpha^{k-1}
f\big\|_{p,\alpha}.$ Here we put for $k=1$, $\Lambda_\alpha^{0}f=f$,
$R_{0}(x, f)=\tau _x(f)$ and $R_{0}(-x, f)=\tau
_{-x}(f).$\\
$\bullet$ We put
 $$\mathcal{A}_{k}=\big\lbrace \phi \in \mathcal{S}_\ast(\mathbb{R}) : \displaystyle\int_0^{+\infty}x^{2i}\phi(x)d\mu_\alpha(x)=0,
 \, \forall i\in \{0,1,...,[\frac{k-1}{2}]\}\big\rbrace,$$
  where $ \mathcal{S}_\ast(\mathbb{R})$ is the space of even Schwartz functions
 on $\mathbb{R}$ and $[\frac{k-1}{2}]$ is the integer part of the number $\frac{k-1}{2}$. Let $\phi \in \mathcal{A}_{k}$ (see Example 4.2, section 4), we shall
 denote by $ \mathcal{C}_{p,q,\phi}^{k,\beta,\alpha}$ the subspace of functions $f$ in $\mathcal{E}(\mathbb{R})$
such that $\Lambda_\alpha^{2i}f\in L^p(\mu_\alpha),$ $0\leq i\leq
[\frac{k-1}{2}]$ and satisfying
\begin{eqnarray*} \int^{+ \infty}_0 \left(\frac{\|f \ast_\alpha
\phi_t \|_{p,\alpha}}{t^{\beta+k-1}}\right)^q \, \frac{dt}{t} < +
\infty  &if& q < +\infty
\\
\mbox{and}\qquad \sup_{t>0}\frac{\|f \ast_\alpha
\phi_t\|_{p,\alpha}}{t^{\beta+k-1}} <+\infty  & if & q =
+\infty,\end{eqnarray*} where $\phi_t$ is the dilation of $\phi$
given by $\phi_t(x)=\frac{1}{t^{2(\alpha+1)}}\phi(\frac{x}{t})$, for
all $t\in (0,+\infty)$ and $x\in\mathbb{R}$.\\

Our aim in this paper is to  generalize to the order $k=1,2,...,$ the results
obtained in \cite{ab1,An} for the case $k=1$. For this purpose, we give some properties and estimates of
the integral remainder of order $k$ and we establish that
$$ \mathcal{B}^k\mathcal{D}_{p,q}^{\beta,\alpha}\cap L^p(\mu_\alpha) =
\mathcal{C}_{p,q,\phi}^{k,\beta,\alpha}.$$
It's clear from this equality that $\mathcal{C}_{p,q,\phi}^{k,\beta,\alpha}$ is independant
 of the specific selection of the fuction $\phi$  in $\mathcal{A}_{k}$.\\

The contents of this paper are as follows. \\In section 2, we
collect some basic definitions and results about harmonic analysis
associated with the Dunkl operator $\Lambda_\alpha$. \\
In section 3, we prove some properties and estimates of the integral
remainder of order $k$. Finally, we establish the coincidence
between the characterizations of the Besov-type spaces of order $k$.\\

Along this paper, we use $c$ to represent a suitable positive
constant which is not necessarily the same in each occurrence.
\section{Preliminaries}
\label{sec:1} In this section, we recall some notations and results
in Dunkl theory on $\mathbb{R}$ and we refer for more details to \cite{am,dun,ro1}.\\

The Dunkl operator is given for $x\in \mathbb{R}$ by
$$ \Lambda_\alpha f(x) = \frac{df}{dx} (x) + \frac{2\alpha+1}{x}
 \Big[\frac{f(x)-f(-x)}{2}\Big],\; f \in \mathcal{C}^{1}(\mathbb{R}).$$
For $\lambda \in \mathbb{C}$, the initial problem
$$\Lambda_\alpha(f)(x) = \lambda f(x),\quad f(0) = 1,\quad x \in \mathbb{R},$$
has a unique solution $E_\alpha(\lambda .)$ called Dunkl kernel
given by
$$E_\alpha(\lambda x) = j_\alpha(i\lambda x) + \frac{\lambda x}
{2(\alpha+1)} j_{\alpha+1} (i\lambda x),\quad x \in \mathbb{R},$$
where $j_\alpha$ is the normalized Bessel function of the first kind
and order $\alpha.$  \\ Let $A_\alpha$ the function defined on
$\mathbb{R}$ by
$$A_\alpha(x) = |x|^{2\alpha+1},\quad x \in \mathbb{R},$$ and $\mu_\alpha$ the weighted
Lebesgue measure on $\mathbb{R}$ given by
\begin{eqnarray}d\mu_\alpha(x) = \frac{A_\alpha(x)}{2^{\alpha +1}\Gamma(\alpha
+1)}dx.\end{eqnarray} There exists an analogue of the classical
Fourier transform with respect to the Dunkl kernel called the Dunkl
transform and denoted by $\mathcal{F}_\alpha$. The Dunkl transform
enjoys properties similar to those of the classical Fourier
transform and is defined for $f \in L^1(\mu_\alpha)$ by
\begin{eqnarray*}
\mathcal{F}_\alpha(f)(x) = \int_{\mathbb{R}}f(y)\,E_\alpha(-ixy)\,
d\mu_\alpha(y), \quad x \in \mathbb{R}.
\end{eqnarray*}
For all $x, y, z \in \mathbb{R}$, we consider
 $$W_\alpha(x,y,z) = \frac{(\Gamma(\alpha+1)^2)}{2^{\alpha-1}
 \sqrt{\pi}\Gamma(\alpha + \frac{1}{2})}
  (1 - b_{x,y,z} + b_{z,x,y} +
 b_{z,y,x}) \Delta_\alpha(x,y,z)$$
where
$$b_{x,y,z} = \left\{ \begin{array}{ll}
\frac{x^2 + y^2 -z^2}{2xy} &\mbox{ if } x, y \in \mathbb{R}
\backslash \{0\},\; z \in \mathbb{R}\\
0 &\mbox{ otherwise }
\end{array}\right.$$
and
$$\Delta_\alpha(x,y,z) = \left\{ \begin{array}{ll}
\frac{([(|x| + |y|)^2 - z^2][z^2-(|x| - |y|)^2])^{\alpha -
\frac{1}{2}}}{|xyz|^{2\alpha}} &\mbox{ if } |z|\in S_{x,y}\\
0 &\mbox{ otherwise }
\end{array}\right.$$
where $$S_{x,y} = \Big[||x| - |y||\;,\; |x| + |y|\Big].$$ The kernel
$W_\alpha $, is even and we have
$$W_\alpha(x,,y,z) = W_\alpha(y,x,z) = W_\alpha(-x,z,y) =
W_\alpha(-z, y, -x)$$ and
$$\int_{\mathbb{R}}|W_\alpha(x,y,z)|d\mu_\alpha(z) \leq
\sqrt{2}.$$
 The Dunkl kernel $E_\alpha$ satisfies the following
product formula
$$E_\alpha(ixt) E_\alpha(iyt) = \int_{\mathbb{R}} E_\alpha(itz)
d\gamma_{x,y}(z),\quad x, y, t \in \mathbb{R}, $$ where
$\gamma_{x,y}$ is a signed measure on $\mathbb{R}$ given by
\begin{eqnarray}d\gamma_{x,y}(z) = \left\{
\begin{array}{ll} W_\alpha(x,y,z)d\mu_\alpha(z) &\mbox{ if } x, y
\in \mathbb{R} \backslash \{0\}\\ d\delta_x(z) &\mbox{ if } y = 0\\
d\delta_y(z) &\mbox{ if } x = 0.
\end{array}\right.\end{eqnarray} with
$\mbox{supp}\gamma_{x,y} = S_{x,y} \cup (-S_{x,y}).$\\ For $x, y \in
\mathbb{R}$ and $f$ a continuous function on $\mathbb{R}$, the Dunkl
translation operator $\tau_x$ is given by
 $$\tau_x(f)(y) =\int_{\mathbb{R}} f(z) d\gamma_{x,y}(z)$$ and satisfies the
following properties :
\begin{itemize}
\item $\tau_x$ is a continuous linear operator from
$\mathcal{E}( \mathbb{R})$ into itself.
\item For all $f \in
\mathcal{E}(\mathbb{R})$,  we have \begin{eqnarray}\tau_x(f)(y) =
\tau_y(f)(x)\quad\mbox{and}\quad \tau_0(f)(x) = f(x)\end{eqnarray}
\begin{eqnarray}\tau_x \,o\, \tau_y = \tau_y\,o\,\tau_x\quad\mbox{and}\quad
\Lambda_\alpha \,o\,\tau_x = \tau_x \,o\,\Lambda_\alpha
.\end{eqnarray}
\item For all $x \in \mathbb{R}$, the
operator $\tau_x$ extends to $L^p(\mu_\alpha),\; p \geq 1$ and we
have for $f \in L^p(\mu_\alpha)$
\begin{eqnarray}
\|\tau_x(f)\|_{p,\alpha} \leq \sqrt{2} \|f\|_{p,\alpha}.
\end{eqnarray}
\end{itemize}
The Dunkl convolution  $f\, \ast_\alpha g$ of two continuous
functions $f$ and $g$ on $\mathbb{R}$ with compact support, is
defined by
$$(f\,\ast_\alpha\, g)(x) = \int_{\mathbb{R}} \tau_x(f)(-y) g(y)
d\mu_\alpha(y),\quad x \in \mathbb{R}. $$ The convolution
$\ast_\alpha$ is associative and commutative and satisfies the
following properties:
\begin{itemize}
\item Assume that $p,q, r \in [1, + \infty[$ satisfying
$\frac{1}{p} + \frac{1}{q} = 1 + \frac{1}{r}$ (the Young condition).
Then the map $(f, g) \rightarrow f\,\ast_\alpha \,g$ defined on
$C_c(\mathbb{R}) \times C_c( \mathbb{R})$, extends to a continuous
map from $L^p(\mu_\alpha) \times L^q(\mu_\alpha)$ to
$L^r(\mu_\alpha)$ and we have \begin{eqnarray}\|f\,\ast_\alpha
\,g\|_{r,\alpha} \leq \sqrt{2}\|f\|_{p,\alpha}
\|g\|_{q,\alpha}.\end{eqnarray}
\item For all $f
\in L^1(\mu_\alpha)$, $g \in L^2(\mu_\alpha)$ and $h \in
L^p(\mu_\alpha)$, $1 \leq p < +\infty,$ we have
\begin{eqnarray*}  \mathcal{F}_\alpha(f\,\ast_\alpha g) =
\mathcal{F}_\alpha(f) \mathcal{F}_\alpha(g),\end{eqnarray*}
\begin{eqnarray}\mbox{and}\quad
 \tau_t (f\,\ast_\alpha\,h) = \tau_t(f)\,\ast_\alpha h =
f\,\ast_\alpha \tau_t(h),\; t \in \mathbb{R}.\end{eqnarray}
\end{itemize}
It has been shown in \cite{mou}, the following generalized Taylor
formula with integral remainder:
\begin{proposition} For $k=1,2,...,$ $f \in
\mathcal{E}(\mathbb{R})$ and $ a \in \mathbb{R}$, we have
\begin{eqnarray} \tau _x f(a) = \sum_{p=0}^{k-1} b_p(x) \Lambda_\alpha^p f(a) +  R_k(x,f)(a),\quad x \in \mathbb{R}\backslash\{0\} ,\end{eqnarray}
with $ R_k(x,f)(a)$ is the integral remainder of order $k$ given by
 \begin{eqnarray} \displaystyle R_k(x,f)(a)= \int_{-|x|}^{|x|} \Theta_{k-1} (x,y) \tau_y (\Lambda_\alpha^{k} f)(a) A_\alpha(y) dy,\end{eqnarray}
where\begin{itemize}
\item[i)] $\displaystyle b_{2m}(x)= \frac{1}{(\alpha+1)_m m!}  \Big(\frac{x}{2} \Big)^{2m}\;$, $\;\displaystyle
  b_{2m+1}(x)= \frac{1}{(\alpha+1)_{m+1} m!}  \Big(\frac{x}{2}
  \Big)^{2m+1}$, for all $\;m\in \mathbb{N}.$
 \item[ii)] $ \Theta_{k-1}(x,y) = u_{k-1}(x,y) + v_{k-1}(x,y)\;$
 with
   $\;\displaystyle u_0(x,y)= \frac{sgn(x)}{2 A_\alpha(x)}\, , $\\$\,\displaystyle v_0(x,y)= \frac{sgn(y)}{2
 A_\alpha(y)}$ ,
 $\quad\displaystyle u_k(x,y)= \int_{|y|}^{|x|} v_{k-1}(x,z) dz\,$ and\\ $\;\displaystyle
 v_k(x,y)= \frac{sgn(y)}{ A_\alpha(y)}\int_{|y|}^{|x|} u_{k-1}(x,z)A_\alpha(z)
   dz.$
   \end{itemize}
   \end{proposition}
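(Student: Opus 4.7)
My plan is to prove the identity by induction on $k$. For the base case $k=1$, noting $b_0(x)=1$, the claim reduces to
$$\tau_x f(a)-f(a)=\int_{-|x|}^{|x|}\Theta_0(x,y)\,\tau_y(\Lambda_\alpha f)(a)\,A_\alpha(y)\,dy.$$
Setting $g(y)=\tau_y f(a)$, the commutation relation (2.4) gives $\Lambda_\alpha g(y)=\tau_y(\Lambda_\alpha f)(a)$. Splitting $g=g_e+g_o$ into its even and odd parts and using the explicit form $\Lambda_\alpha g(y)=g'(y)+\frac{2\alpha+1}{y}g_o(y)$, parity considerations kill half of each integrand: the $u_0$-contribution reduces, via the identity $(y^{2\alpha+1}g_o(y))'=\bigl[g_o'(y)+(2\alpha+1)g_o(y)/y\bigr]y^{2\alpha+1}$ on $(0,|x|)$, to $sgn(x)\,g_o(|x|)=g_o(x)$; the $v_0$-contribution reduces to $\int_0^{|x|}g_e'(y)\,dy=g_e(x)-g_e(0)$. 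Their sum equals $g(x)-g(0)$, as required.

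For the inductive step, assume the formula holds at order $k$. Writing $\tau_y(\Lambda_\alpha^{k}f)(a)=\Lambda_\alpha G(y)$ with $G(y)=\tau_y(\Lambda_\alpha^{k-1}f)(a)$, I perform integration by parts on $R_k(x,f)(a)$ against each of the two summands $u_{k-1},v_{k-1}$ of $\Theta_{k-1}$. The recursions in (ii) are tailor-made for this procedure: the factor $A_\alpha(y)$ from the measure pairs with $u_{k-1}$ and, after an $A_\alpha$-weighted integration by parts converting the integrand into a total $y$-derivative, the resulting inner integral recombines into the kernel $v_k$ via its defining formula $v_k(x,y)=\frac{sgn(y)}{A_\alpha(y)}\int_{|y|}^{|x|}u_{k-1}(x,z)A_\alpha(z)\,dz$; the $v_{k-1}$ piece, whose factor $1/A_\alpha(y)$ cancels the measure, analogously produces the kernel $u_k(x,y)=\int_{|y|}^{|x|}v_{k-1}(x,z)\,dz$. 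The interior integrals thus reassemble into $R_{k+1}(x,f)(a)$, while the boundary contributions at $y=\pm|x|$ collapse to a single polynomial factor in $x$ multiplying $\Lambda_\alpha^{k}f(a)$, which must equal $b_k(x)$.

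The explicit form of $b_k(x)$ stated in (i) is then most easily identified by testing the identity on the Dunkl kernel $f_\lambda(y)=E_\alpha(\lambda y)$: since $\Lambda_\alpha^p f_\lambda=\lambda^p f_\lambda$ and $\tau_x f_\lambda(a)=E_\alpha(\lambda x)E_\alpha(\lambda a)$, matching the power series expansion of $E_\alpha(\lambda x)$ in $\lambda$ against $\sum_p b_p(x)\lambda^p$ recovers the closed forms $b_{2m}(x)=(x/2)^{2m}/((\alpha+1)_m m!)$ and $b_{2m+1}(x)=(x/2)^{2m+1}/((\alpha+1)_{m+1} m!)$. The main obstacle throughout is the parity bookkeeping forced by the reflection term of $\Lambda_\alpha$: since $\Lambda_\alpha$ swaps even and odd parts, each integration by parts must be split along the parity of $y$, and the $sgn$ factors carried by $u_{k-1}$ and $v_{k-1}$ must be tracked carefully to confirm that they recombine into the correct $\Theta_k=u_k+v_k$ at the next stage.
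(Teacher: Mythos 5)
The paper does not prove this proposition: it is quoted verbatim from Mourou \cite{mou}, so there is no in-paper argument to compare against. Your strategy — induction on $k$, with the base case handled by splitting $g(y)=\tau_y f(a)$ into even and odd parts and recognizing $\bigl(y^{2\alpha+1}g_o(y)\bigr)'$, and the inductive step by $A_\alpha$-weighted integration by parts that raises the kernel index via the defining recursions for $u_k,v_k$ — is exactly the construction underlying the cited result, and your base case computation is correct and complete. One detail in your inductive step is off: since the kernels are antiderivatives of the form $\int_{|y|}^{|x|}(\cdots)\,dz$, they vanish at $y=\pm|x|$, so the boundary contributions there are zero; the term $b_k(x)\Lambda_\alpha^k f(a)$ arises from the boundary at $y=0$ (it must, since the surviving factor is $h(0)=\Lambda_\alpha^k f(a)$ rather than $h(\pm|x|)=\tau_{\pm x}(\Lambda_\alpha^k f)(a)$), and one finds explicitly $b_k(x)=2\int_0^{|x|}u_{k-1}(x,z)A_\alpha(z)\,dz$. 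Your identification of the closed form of $b_k$ by testing on $E_\alpha(\lambda\,\cdot)$ and matching powers of $\lambda$ (legitimate because the coefficients produced by the induction do not depend on $f$ and the remainder is $O(\lambda^k)$) is a clean alternative to computing that integral recursively.
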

   According to (\cite{ro3}, Lemma 2.2), the Dunkl operator
   $\Lambda_\alpha$ have the following regularity properties:
\begin{eqnarray} \Lambda_\alpha \;\mbox{leaves}\;\, \mathcal{C}_c^\infty(\mathbb{R})\;
 \mbox{and} \;\mbox {the\, Schwartz\, space}\; \mathcal{S}(\mathbb{R})\; \mbox{invariant}. \end{eqnarray}
\section{Some properties of the integral remainder of order $k$}
\label{sec:2} In this section, we prove some properties and
estimates of the integral remainder in the generalized Taylor
formula.
\begin{remark}
 Let $k=1,2,...,$ $f \in \mathcal{E}(\mathbb{R})$ and $ x \in
 \mathbb{R}\backslash\{0\}$.
 \begin{enumerate}
\item[1/] From Proposition 2.1, we have
  \begin{eqnarray}
R_k(x, f)& =& \tau_x(f)- f-b_1(x)\Lambda_\alpha f...- b_{k-1}(x)\Lambda_\alpha^{k-1}f \nonumber  \\
  &=&R_{k-1}(x, f)-b_{k-1}(x)\Lambda_\alpha^{k-1} f,
  \end{eqnarray} where we put for $k=1$, $R_{0}(x, f) = \tau_x(f).$
  Observe that $$R_1(x, f)=R_{0}(x, f)-b_{0}(x)\Lambda_\alpha^{0} f=\tau_x(f)-
  f.$$
  \item[2/] According to (\cite{mou}, p.352) and Proposition 2.1, i), we have
 \begin{eqnarray}
 \displaystyle \int_{-|x|}^{|x|} |\Theta_{k-1} (x,y)|  A_\alpha(y) dy &\leq& b_k(|x|)+|x| b_{k-1}(|x|)\nonumber \\
  &\leq& c\, |x|^{k}.
\end{eqnarray}
\item[3/] Note that the function $y\longmapsto
\tau_y(f)-f$ is continuous on $\mathbb{R}$ (see \cite{mou.T}, Lemma
1, (ii)), which implies that the same is true for the function
$y\longmapsto R_k(y, f).$
 \end{enumerate}
\end{remark}
\begin{lemma}
 Let $k=1,2,...,$ then there exists a constant $c>0$ such that for all $f \in \mathcal{E}(\mathbb{R})$ satisfying
 $\Lambda_\alpha^{k-1}f \in L^p(\mu_\alpha) $, we have
 \begin{equation}
  \| R_{k-1}(x,f)\|_{p,\alpha} \leq c \,|x|^{k-1} \| \Lambda_\alpha^{k-1} f \|_{p,\alpha},\quad x \in \mathbb{R}
  \backslash\{0\}.
 \end{equation}
 \end{lemma}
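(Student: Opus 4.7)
The plan is to reduce the statement directly to the two ingredients already available in the excerpt: the $L^{p}(\mu_\alpha)$-boundedness of the Dunkl translation operator (inequality (2.5)) and the pointwise estimate on the integral of $|\Theta_{k-1}(x,y)|A_\alpha(y)$ from Remark 3.1, item 2/.

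I would begin by handling the base case $k=1$ separately. Here $R_{0}(x,f)=\tau_{x}(f)$ by the convention stated just before the lemma, so the inequality to be proved reads $\|\tau_{x}(f)\|_{p,\alpha}\le c\,|x|^{0}\|f\|_{p,\alpha}$, which is exactly (2.5) with $c=\sqrt{2}$. Thus the $k=1$ case is immediate.

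For $k\ge 2$ I would start from the integral representation (2.9) of Proposition 2.1, applied with $k$ replaced by $k-1$:
\begin{equation*}
R_{k-1}(x,f)(a)=\int_{-|x|}^{|x|}\Theta_{k-2}(x,y)\,\tau_{y}(\Lambda_{\alpha}^{k-1}f)(a)\,A_\alpha(y)\,dy.
\end{equation*}
Since $\Lambda_{\alpha}^{k-1}f\in L^{p}(\mu_\alpha)$ by hypothesis, and $y\mapsto \tau_{y}(\Lambda_{\alpha}^{k-1}f)$ takes values in $L^{p}(\mu_\alpha)$ with a uniform norm bound, I would apply Minkowski's integral inequality in $L^{p}(\mu_\alpha)$ to pull the $\|\cdot\|_{p,\alpha}$ norm inside the integral:
\begin{equation*}
\|R_{k-1}(x,f)\|_{p,\alpha}\le \int_{-|x|}^{|x|}|\Theta_{k-2}(x,y)|\,\|\tau_{y}(\Lambda_{\alpha}^{k-1}f)\|_{p,\alpha}\,A_\alpha(y)\,dy.
\end{equation*}

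Now I would use (2.5) to bound $\|\tau_{y}(\Lambda_{\alpha}^{k-1}f)\|_{p,\alpha}\le\sqrt{2}\,\|\Lambda_{\alpha}^{k-1}f\|_{p,\alpha}$ uniformly in $y$, factor this constant out of the integral, and then invoke Remark 3.1, item 2/, with index shifted from $k$ to $k-1$, giving
\begin{equation*}
\int_{-|x|}^{|x|}|\Theta_{k-2}(x,y)|\,A_\alpha(y)\,dy\le c\,|x|^{k-1}.
\end{equation*}
Combining these two bounds yields the desired inequality, with $c$ absorbing $\sqrt{2}$ and the constant from Remark 3.1.

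The only real point requiring a sentence of care is the application of Minkowski's integral inequality: I would briefly note that the map $y\mapsto\tau_{y}(\Lambda_{\alpha}^{k-1}f)$ is strongly measurable into $L^{p}(\mu_\alpha)$ on the compact interval $[-|x|,|x|]$ (by continuity of the Dunkl translation in the $y$-variable together with (2.5)), and that $\Theta_{k-2}(x,\cdot)A_\alpha(\cdot)$ is integrable on $[-|x|,|x|]$ by Remark 3.1, item 2/, so Minkowski applies without further obstruction. No new machinery is needed beyond the facts already collected in Section 2 and Remark 3.1.
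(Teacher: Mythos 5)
Your proposal is correct and follows essentially the same route as the paper's proof: the base case $k=1$ via (2.5), then for $k\ge 2$ the integral representation (2.9) at order $k-1$, Minkowski's inequality for integrals, the uniform bound (2.5) on the translations, and the estimate (3.2) with index shifted. The extra remark on strong measurability justifying Minkowski is a harmless refinement the paper leaves implicit.
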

\begin{proof}
 Let $k=1,2,...,$ $f \in \mathcal{E}(\mathbb{R})$ such that $\Lambda_\alpha^{k-1}f \in L^p(\mu_\alpha) $ and $x \in
 \mathbb{R}\backslash\{0\}$. For $k=1$, by (2.5), it's clear that $\|R_{0}(x, f)\| =\| \tau_x(f)\|_{p,\alpha}\leq c\,\|f\|_{p,\alpha}.$
Using the Minkowski's inequality for integrals, (2.5) and (2.9), we
have for $k\geq2$
 \begin{eqnarray*}
  \| R_{k-1}(x,f)\|_{p,\alpha}  &\leq& \int_{-|x|}^{|x|}| \Theta_{k-2} (x,y)| \
   \| \tau_y ( \Lambda_\alpha^{k-1} f)\|_{p,\alpha} A_\alpha(y) dy\\
   &\leq& c \;  \|   \Lambda_\alpha^{k-1} f \|_{p,\alpha} \int_{-|x|}^{|x|} |\Theta_{k-2} (x,y)| A_\alpha(y) dy.
 \end{eqnarray*}
From (3.2),  we deduce our result.
\end{proof}
\begin{remark}  Let $k=1,2,...,$ $f \in
\mathcal{E}(\mathbb{R})$ and $x \in \mathbb{R}\backslash\{0\}.$
\begin{itemize}
\item[1/]  If $\Lambda_\alpha^{k-1}f \in L^p(\mu_\alpha) ,$  then we have by (3.1), (3.3) and
Proposition 2.1, i),
 \begin{eqnarray}
 \|R_k(x,f)\|_{p,\alpha}& \leq&
\|R_{k-1}(x,f)\|_{p,\alpha}+
    \|b_{k-1}(x) \Lambda_\alpha^{k-1} f\|_{p,\alpha} \nonumber
     \\&\leq& c \,|x|^{k-1} \|\Lambda_\alpha^{k-1} f \|_{p,\alpha}.
 \end{eqnarray}
 \item[2/] We observe from Proposition 2.1 that
 \begin{eqnarray}R_{k}(x,f)+R_{k}(-x,f)&=&\tau_x(f)+\tau_{-x}(f)-\sum_{p=0}^{k-1}
  \big(b_p(x)+b_p(-x)\big)\Lambda_\alpha^p f\nonumber
  \\&=&\tau_x(f)+\tau_{-x}(f)-2
  \sum_{i=0}^{[\frac{k-1}{2}]} b_{2i}(x)\Lambda_\alpha^{2i} f.
  \end{eqnarray}
  \end{itemize}
\end{remark}

\section{Characterizations of Besov-Dunkl spaces of order $k$}
In this section, we begin with a remark, a proposition containing sufficient
conditions and an example.
\begin{remark} Let $k=1, 2,...,$ $f\in \mathcal{E}(\mathbb{R})$ such that
$\Lambda_\alpha^{k-1}f$ is in $L^p(\mu_\alpha)$ and
$x\in(0,+\infty).$
\begin{itemize}
  \item[1/] We can assert from (3.1) that \begin{eqnarray}\omega_{p,\alpha}^k(x,f) =\| R_{k}(x, f)+R_{k}(-x,
f)\|_{p,\alpha}.\end{eqnarray}
\item[2/] For $k=1$, $
\omega_{p,\alpha}^k(x,f) = \|\tau_x(f)+\tau_{-x}(f)-
2f\|_{p,\alpha},$ called the modulus of continuity of second order
of $f$. In this case, we recover with this expression the Besov-type
spaces defined in \cite{ab1,An}.
\end{itemize}
\end{remark}
\begin{proposition} Let $k=1, 2,....$, $ 1 \leq p < +\infty $, $ 1 \leq q \leq +\infty$,
$0<\beta<1$ and $f\in \mathcal{E}(\mathbb{R})$ such that
$\Lambda_\alpha^{k-1}f$, $\Lambda_\alpha^k f$ are in
$L^p(\mu_\alpha)$, then
$f\in\mathcal{B}^k\mathcal{D}_{p,q}^{\beta,\alpha}.$
\end{proposition}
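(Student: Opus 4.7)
The plan is to reduce $\omega_{p,\alpha}^k(x,f)$ to a quantity controlled by $R_k$, and then apply two different estimates for small and large values of $x$, each coming from a different assumption ($\Lambda_\alpha^k f\in L^p$ versus $\Lambda_\alpha^{k-1} f\in L^p$).

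First I would rewrite the modulus via Remark 4.1(1), obtaining
\begin{equation*}
\omega_{p,\alpha}^k(x,f)=\bigl\|R_k(x,f)+R_k(-x,f)\bigr\|_{p,\alpha},
\end{equation*}
so it suffices to estimate $\|R_k(\pm x,f)\|_{p,\alpha}$. Because the hypothesis now provides $\Lambda_\alpha^k f\in L^p(\mu_\alpha)$, I can apply Lemma 3.1 with the index shifted by one to get
\begin{equation*}
\|R_k(\pm x,f)\|_{p,\alpha}\le c\,|x|^k\,\|\Lambda_\alpha^k f\|_{p,\alpha}.
\end{equation*}
On the other hand, the bound (3.4) of Remark 3.1(1) (which only uses $\Lambda_\alpha^{k-1}f\in L^p$) gives
\begin{equation*}
\|R_k(\pm x,f)\|_{p,\alpha}\le c\,|x|^{k-1}\,\|\Lambda_\alpha^{k-1}f\|_{p,\alpha}.
\end{equation*}
So I have a ``small $x$'' estimate of order $x^k$ and a ``large $x$'' estimate of order $x^{k-1}$.

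Next I would split the integral defining $\mathcal{B}^k\mathcal{D}_{p,q}^{\beta,\alpha}$ at $x=1$. Using the first estimate for $0<x\le 1$,
\begin{equation*}
\int_0^1\!\Bigl(\tfrac{\omega_{p,\alpha}^k(x,f)}{x^{\beta+k-1}}\Bigr)^{\!q}\tfrac{dx}{x}\le c\,\|\Lambda_\alpha^k f\|_{p,\alpha}^q\int_0^1 x^{(1-\beta)q-1}\,dx,
\end{equation*}
which is finite since $0<\beta<1$. Using the second estimate for $x>1$,
\begin{equation*}
\int_1^{+\infty}\!\Bigl(\tfrac{\omega_{p,\alpha}^k(x,f)}{x^{\beta+k-1}}\Bigr)^{\!q}\tfrac{dx}{x}\le c\,\|\Lambda_\alpha^{k-1}f\|_{p,\alpha}^q\int_1^{+\infty}x^{-\beta q-1}\,dx,
\end{equation*}
which is finite since $\beta>0$. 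The case $q=+\infty$ is handled identically by taking suprema: the small-$x$ bound contributes $x^{1-\beta}\to 0$ as $x\to 0^+$ and the large-$x$ bound contributes $x^{-\beta}$, both uniformly bounded. Since $\Lambda_\alpha^{k-1}f\in L^p(\mu_\alpha)$ by hypothesis, this gives $f\in\mathcal{B}^k\mathcal{D}_{p,q}^{\beta,\alpha}$.

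There is no genuine obstacle here: the content of the proof is the strategic observation that $\omega_{p,\alpha}^k$ is really a norm of $R_k$, and that having one extra power of $\Lambda_\alpha$ in $L^p$ upgrades Lemma 3.1 to an $x^k$ estimate. The routine part is just the two elementary integral computations, where the balance $(1-\beta)q>0$ controls the behavior at $0$ and $\beta q>0$ controls the behavior at infinity.
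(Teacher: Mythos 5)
Your proof is correct and follows essentially the same route as the paper: both rewrite $\omega_{p,\alpha}^k(x,f)$ as $\|R_k(x,f)+R_k(-x,f)\|_{p,\alpha}$ via (4.1), combine the shifted Lemma 3.1 bound $c\,x^k\|\Lambda_\alpha^k f\|_{p,\alpha}$ with the bound (3.4) of order $x^{k-1}$, and split the integral at $x=1$. The only difference is cosmetic: you spell out the exponents $(1-\beta)q$ and $\beta q$ explicitly, where the paper leaves the two convergent integrals implicit.
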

\begin{proof} Let $k=1, 2,....$, $ 1 \leq p < +\infty $, $ 1 \leq q \leq +\infty$,
$0<\beta<1$ and $f\in \mathcal{E}(\mathbb{R})$ such that
$\Lambda_\alpha^{k-1}f$, $\Lambda_\alpha^k f$ are in
$L^p(\mu_\alpha).$
 By (3.3), (3.4) and (4.1), we obtain for $x \in (0,+\infty)$
\begin{eqnarray*}\omega_{p,\alpha}^k(x,f)\leq c \,x^k \lVert \Lambda_\alpha^k f
\rVert_{p,\alpha} \quad\mbox{and}\quad \omega_{p,\alpha}^k(x,f)\leq
c \,x^{k-1} \lVert \Lambda_\alpha^{k-1} f
\rVert_{p,\alpha}.\end{eqnarray*} Then we can write,
\begin{eqnarray*}
\int_0^{+\infty}
\Big(\frac{\omega_{p,\alpha}^k(x,f)}{x^{\beta+k-1}}\Big)^q
\frac{dx}{x}\leq  c \int_0^1 \Big(\frac{\| \Lambda_\alpha^k f
\|_{p,\alpha}}{x^{\beta-1}}\Big)^q \frac{dx}{x}+c \int_1^{+\infty}
\Big(\frac{\| \Lambda_\alpha^{k-1} f
\|_{p,\alpha}}{x^{\beta}}\Big)^q \frac{dx}{x},
\end{eqnarray*} giving two finite integrals. Here when $q=+\infty$, we make the usual
modification.
\end{proof}
\begin{example} From (2.10) and Proposition 4.1, we can assert that the
spaces $\mathcal{C}_c^\infty(\mathbb{R})$ and
$\mathcal{S}(\mathbb{R})$ are included in $\mathcal{B}^k\mathcal{D}_{p,q}^{\beta,\alpha}\cap L^p(\mu_\alpha)$.
\end{example}
\label{sec:3} In order to establish that
$\mathcal{B}^k\mathcal{D}_{p,q}^{\beta,\alpha}\cap L^p(\mu_\alpha)=
\mathcal{C}_{p,q,\phi}^{k,\beta,\alpha}$, we give an example of functions in the class $\mathcal{A}_{k}$ and we prove
some useful lemmas.
\begin{example} According to (\cite{ro2}, Example 3.3,(2)), the
generalized Hermite polynomials on $\mathbb{R}$, denoted by
$H_n^{\alpha+\frac{1}{2}}$, $n\in \mathbb{N}$ are orthogonal with
respect to the measure $e^{-x^2}d\mu_\alpha(x)$ and can be written
as
$$H_{2n}^{\alpha+\frac{1}{2}}(x)=(-1)^n 2^{2n}n!\,L_n^\alpha(x^2)
\quad\mbox{and}\quad H_{2n+1}^{\alpha+\frac{1}{2}}(x)=(-1)^n
2^{2n+1}n!\,xL_n^{\alpha+1}(x^2),$$ where the $L_n^\alpha$ are the
Laguerre polynomials of index $\alpha\geq -\frac{1}{2}$, given by
$$L_n^\alpha(x)=\frac{1}{n!}\, x^{-\alpha}\,e^x
\frac{d^n}{dx^n}\Big(x^{n+\alpha}e^{-x}\Big).$$ For $k=1,2,...,$ fix
any positive integer $n_0>[\frac{k-1}{2}]$ and take for example the
function defined on $\mathbb{R}$ by $\phi(x)=H_{2
n_0}^{\alpha+\frac{1}{2}}(x)\,e^{-x^2}$. Put $P_i(x)=x^{2i}$ for
$i\in \{0,1,...,[\frac{k-1}{2}]\}$, since $P_i\in$
$span_\mathbb{R}\,\{H_p^{\alpha+\frac{1}{2}},
p=0,1,...,2[\frac{k-1}{2}]\}$, then we can assert that $ \phi \in
\mathcal{S}_\ast(\mathbb{R})$ and satisfy
$\displaystyle\int_0^{+\infty}x^{2i}\,\phi(x)\,d\mu_\alpha(x)=0$, which gives that $\phi\in \mathcal{A}_{k}$.
\end{example}
 \begin{lemma}
Let $k=1,2,...,$ $\phi \in \mathcal{A}_{k}$, $1\leq p< +\infty$ and
$r>0$, then there exists a constant $c>0$ such that for all $f\in
\mathcal{E}(\mathbb{R})\cap L^p(\mu_\alpha) $ satisfying
$\Lambda_\alpha^{k-1} f \in L^p(\mu_\alpha) $ and $t>0$, we have
\begin{eqnarray} \|\phi_t \ast_\alpha f\|_{p,\alpha} \leq
c \int_0^{+\infty}\min\Big\{\Big(\frac{x}{t}\Big)^{2(\alpha+1)},
\Big(\frac{t}{x}\Big)^{r}\Big\}\,\omega_{p,\alpha}^k(x,f)\frac{dx}{x}.\end{eqnarray}
\end{lemma}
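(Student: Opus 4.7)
The plan is to express $\phi_t \ast_\alpha f$ as an integral over $(0,\infty)$ of $R_k(x,f)+R_k(-x,f)$ against a kernel coming from $\phi_t$, after which Minkowski's inequality and the Schwartz decay of $\phi$ will deliver the claimed bound.

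First I would rewrite the convolution using the commutativity of $\ast_\alpha$, the symmetry $\tau_y(f)(-x)=\tau_{-x}(f)(y)$ from (2.3), and the evenness of $\phi_t$ and $\mu_\alpha$:
\begin{equation*}
(\phi_t\ast_\alpha f)(y)=\int_0^{+\infty}\bigl[\tau_x(f)(y)+\tau_{-x}(f)(y)\bigr]\,\phi_t(x)\,d\mu_\alpha(x).
\end{equation*}
Next, applying the generalized Taylor formula (2.7) to each translation, the odd-indexed $b_p$ cancel (since they are odd functions) while the even-indexed ones double, yielding
\begin{equation*}
\tau_x(f)+\tau_{-x}(f)=2\sum_{i=0}^{[(k-1)/2]}b_{2i}(x)\Lambda_\alpha^{2i}f+R_k(x,f)+R_k(-x,f).
\end{equation*}
Since $b_{2i}(x)$ is a constant multiple of $x^{2i}$, the hypothesis $\phi\in\mathcal{A}_k$, combined with the scaling change of variable $u=x/t$ in $\int_0^{+\infty}x^{2i}\phi_t(x)\,d\mu_\alpha(x)$, shows that each moment integral against $\phi_t$ vanishes. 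The polynomial part therefore drops out, leaving
\begin{equation*}
(\phi_t\ast_\alpha f)(y)=\int_0^{+\infty}\bigl[R_k(x,f)(y)+R_k(-x,f)(y)\bigr]\,\phi_t(x)\,d\mu_\alpha(x).
\end{equation*}

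Taking the $L^p(\mu_\alpha)$-norm in $y$ and applying Minkowski's inequality for integrals, together with identity (4.1) from Remark 4.1, gives
\begin{equation*}
\|\phi_t\ast_\alpha f\|_{p,\alpha}\leq\int_0^{+\infty}\omega^k_{p,\alpha}(x,f)\,|\phi_t(x)|\,d\mu_\alpha(x).
\end{equation*}
It remains to estimate the weight. Writing $d\mu_\alpha(x)=c_\alpha x^{2\alpha+1}\,dx$ and using $\phi_t(x)=t^{-2(\alpha+1)}\phi(x/t)$, a direct rearrangement produces
\begin{equation*}
|\phi_t(x)|\,d\mu_\alpha(x)=c_\alpha\,|\phi(x/t)|\,\Bigl(\tfrac{x}{t}\Bigr)^{2(\alpha+1)}\,\tfrac{dx}{x}.
\end{equation*}
Since $\phi\in\mathcal{S}_\ast(\mathbb{R})$, for any fixed $r>0$ we may choose $N=r+2(\alpha+1)$ and use the Schwartz bound $|\phi(u)|\leq c\min\{1,u^{-N}\}$; this converts the factor $|\phi(x/t)|(x/t)^{2(\alpha+1)}$ precisely into $c\min\{(x/t)^{2(\alpha+1)},(t/x)^r\}$, giving the desired inequality.

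I expect no serious obstacle: the only delicate point is the bookkeeping in step two, where one must be careful that the polynomial part of the Taylor expansion is exactly annihilated by the moment conditions defining $\mathcal{A}_k$ (the index range $[\tfrac{k-1}{2}]$ must match the highest even $p\leq k-1$ appearing in the Taylor sum). The final kernel estimate is a routine consequence of Schwartz decay and only requires choosing $N$ large enough in terms of $r$ and $\alpha$.
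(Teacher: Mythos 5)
Your proposal is correct and follows essentially the same route as the paper's proof: symmetrize the convolution over $(0,+\infty)$, use the generalized Taylor formula together with the moment conditions defining $\mathcal{A}_k$ to replace $\tau_x(f)+\tau_{-x}(f)$ by $R_k(x,f)+R_k(-x,f)$, apply Minkowski's inequality with identity (4.1), and then bound the kernel $(x/t)^{2(\alpha+1)}|\phi(x/t)|$ by the stated minimum. Your final step, using the single pointwise Schwartz bound $|\phi(u)|\leq c\min\{1,u^{-N}\}$ with $N=r+2(\alpha+1)$, is just a cleaner packaging of the paper's two separate estimates (4.5) and (4.6).
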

\begin{proof} Let $k=1,2,...$, $t>0$, we have for $i\in \{0,1,...,[\frac{k-1}{2}]\}$,
\begin{eqnarray}\int_0^{+\infty}x^{2i}\phi(x)d\mu_\alpha(x)=0\;
  \Longrightarrow\int_0^{+\infty}x^{2i}\phi_t(x)d\mu_\alpha(x)=0, \end{eqnarray}
  where $\phi_{t}$ is the dilatation of $\phi$.\\
  We observe that, \begin{eqnarray*}(\phi_t \ast_\alpha
  f)(y)&=&\int_{\mathbb{R}}\phi_t(x)\tau_y(f)(-x)d\mu_\alpha(x)\\&=&\int_{\mathbb{R}}\phi_t(x)\tau_y(f)(
  x)d\mu_\alpha(x),\end{eqnarray*} then using (2.3), (3.5), (4.3) and Proposition 2.1,
   we can write for $y\in\mathbb{R}$ \\$2(\phi_t \ast_\alpha
  f)(y)$\begin{eqnarray*}&=&\int_{\mathbb{R}}\phi_t(x)\Big(\tau_y(f)(x)+\tau_y(f)(-x)-2\sum_{i=0}^{[\frac{k-1}{2}]} b_{2i}(x)
  \Lambda_\alpha^{2i} f(y)
    \Big)d\mu_\alpha(x)\\&=&
  2\int_0^{+\infty}\phi_t(x)\Big(\tau_x(f)(y)+\tau _{-x}(f)(y)-2\sum_{i=0}^{[\frac{k-1}{2}]}
   b_{2i}(x)\Lambda_\alpha^{2i} f(y)\Big)d\mu_\alpha(x)\\&=&2\int_0^{+\infty}\phi_t(x)
   \big(R_{k}(x,f)(y)+R_{k}(-x,f)(y)\big)d\mu_\alpha(x).\end{eqnarray*}
By Minkowski's inequality for integrals, we obtain\begin{eqnarray}
\;\|\phi_t \ast_\alpha f\|_{p,\alpha}&\leq&
 \int_0^{+\infty}|\phi_t(x)|\;\|R_{k}(x,f)+R_{k}(-x,f)
 \|_{p,\alpha}d\mu_\alpha(x)\nonumber\\&\leq&
 c\int_0^{+\infty}\Big(\frac{x}{t}\Big)^{2(\alpha+1)}
 \Big|\phi\Big(\frac{x}{t}\Big) \Big|\;\omega_{p,\alpha}^k(x,f)\frac{dx}{x}\\&\leq&
 c\int_0^{+\infty}\Big(\frac{x}{t}\Big)^{2(\alpha+1)} \omega_{p,\alpha}^k(x,f)\frac{dx}{x}. \end{eqnarray}
 On the other hand, since $\phi \in  \mathcal{S}_\ast( \mathbb{R})$, then from (4.4)
 and for $r>0$ there exists a constant $c$ such that
 \begin{eqnarray} \|\phi_t \ast_\alpha f\|_{p,\alpha}\leq
 c\int_0^{+\infty}\Big(\frac{t}{x}\Big)^{r} \;\omega_{p,\alpha}^k(x,f)\frac{dx}{x}
  \;.\end{eqnarray}
 Using (4.5) and (4.6), we deduce our result.
\end{proof}
\begin{lemma} Let $k=1,2,...,$ $1<p< +\infty$ and $ \phi \in \mathcal{A}_{k}$, then there exists a
constant $c>0$ such that for all $f\in \mathcal{E}(\mathbb{R})$
satisfying $\Lambda_\alpha^{2i} f \in L^p(\mu_\alpha),$ $0\leq i
\leq [\frac{k-1}{2}]$ and $x>0$, we have
\begin{eqnarray}\omega_{p,\alpha}^k(x,f) \leq c \int_0^{+\infty}
\min\Big\{\Big(\frac{x}{t}\Big)^{k-1},\Big(\frac{x}{t}\Big)^{k}\Big\}\|\phi_t
\ast_\alpha f\|_{p,\alpha} \frac{dt}{t}\;.
\end{eqnarray}
\end{lemma}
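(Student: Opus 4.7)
The strategy is a Calder\'on-type reproducing argument that inverts the inequality of Lemma 4.1. The first step is to construct an even Schwartz function $\psi \in \mathcal{S}_*(\mathbb{R})$ satisfying
\[
\int_0^{+\infty} \mathcal{F}_\alpha(\phi)(t\xi)\,\mathcal{F}_\alpha(\psi)(t\xi)\,\frac{dt}{t}=1, \qquad \xi \in \mathbb{R}\backslash\{0\}.
\]
The moment conditions defining $\mathcal{A}_{k}$ translate, via $\mathcal{F}_\alpha$, into vanishing of $\mathcal{F}_\alpha(\phi)$ of the appropriate order at the origin, which makes such a $\psi$ constructible (for instance with $\mathcal{F}_\alpha(\psi)$ supported in a dyadic annulus). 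After a density argument this gives the reproducing identity $f = \int_0^{+\infty}(f *_\alpha \phi_t) *_\alpha \psi_t\,\frac{dt}{t}$ for $f$ in the class of the lemma.

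I would then apply $R_k(x,\cdot)+R_k(-x,\cdot)$ to this identity. Since $\tau_x$ commutes with $*_\alpha$ by (2.7) and, on the Schwartz factor $\psi_t$, $\Lambda_\alpha^p$ also commutes with $*_\alpha$ (as checked through $\mathcal{F}_\alpha$), the remainder slides inside the $t$-integral and acts on $\psi_t$:
\[
R_k(x,f)+R_k(-x,f) = \int_0^{+\infty}(f *_\alpha \phi_t) *_\alpha \bigl[R_k(x,\psi_t)+R_k(-x,\psi_t)\bigr]\,\frac{dt}{t}.
\]
Taking $\|\cdot\|_{p,\alpha}$, using (4.1) on the left and applying Young's inequality (2.6) with exponents $(p,1,p)$, yields
\[
\omega_{p,\alpha}^k(x,f) \leq c \int_0^{+\infty} \|f *_\alpha \phi_t\|_{p,\alpha}\,\|R_k(x,\psi_t)+R_k(-x,\psi_t)\|_{1,\alpha}\,\frac{dt}{t}.
\]

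What remains is the kernel bound $\|R_k(x,\psi_t)+R_k(-x,\psi_t)\|_{1,\alpha} \leq c\min\bigl\{(x/t)^{k-1},(x/t)^{k}\bigr\}$. Estimate (3.4) applied to $\psi_t$ gives $\|R_k(x,\psi_t)\|_{1,\alpha} \leq c\,|x|^{k-1}\|\Lambda_\alpha^{k-1}\psi_t\|_{1,\alpha}$, while Lemma 3.1 applied with $k$ replaced by $k+1$ gives $\|R_k(x,\psi_t)\|_{1,\alpha} \leq c\,|x|^{k}\|\Lambda_\alpha^{k}\psi_t\|_{1,\alpha}$. The scaling $\psi_t(y)=t^{-2(\alpha+1)}\psi(y/t)$ combined with the homogeneity of $\Lambda_\alpha$ and a change of variables yields $\|\Lambda_\alpha^{j}\psi_t\|_{1,\alpha} = t^{-j}\|\Lambda_\alpha^{j}\psi\|_{1,\alpha}$, so the two bounds become $c(x/t)^{k-1}$ and $c(x/t)^{k}$, respectively; combining with the $\pm x$ versions gives the kernel estimate, and inserting it in the previous display completes the proof.

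The hardest step is the first one: producing $\psi$ and justifying the reproducing formula together with the interchanges needed to move $R_k(\pm x,\cdot)$ inside the $t$-integral. This is standard in the Euclidean Besov theory but needs careful adaptation to the Dunkl setting, where Plancherel for $\mathcal{F}_\alpha$ and the invariance (2.10) of $\mathcal{C}_c^\infty(\mathbb{R})$ under $\Lambda_\alpha$ provide the density and approximation tools needed to extend the identity from $L^2(\mu_\alpha)$ to functions $f \in \mathcal{E}(\mathbb{R})$ with $\Lambda_\alpha^{2i}f \in L^p(\mu_\alpha)$.
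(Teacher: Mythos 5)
Your proposal runs on the same mechanism as the paper's proof: reproduce $f$ from the family $f\ast_\alpha\phi_t$, use the symmetrization identity (3.5) to slide $R_k(x,\cdot)+R_k(-x,\cdot)$ onto the smooth reproducing factor (so that only the even derivatives $\Lambda_\alpha^{2i}$ appear, matching the hypothesis), apply Young's inequality (2.6), and bound the resulting $L^1$ kernel by $\min\{(x/t)^{k-1},(x/t)^{k}\}$ via (3.3), (3.4) and dilation scaling. Where you differ is in the implementation of the reproducing step, and the paper's choice is designed precisely to defuse the two points you flag as hardest. It does not construct a normalized companion $\psi$: it takes the second factor to be $\phi_t$ itself, works throughout with the truncated integral $f_{\varepsilon,\delta}=\int_\varepsilon^\delta \phi_t\ast_\alpha\phi_t\ast_\alpha f\,\frac{dt}{t}$ (where moving $R_k(\pm x,\cdot)$ inside the integral is unproblematic), establishes the uniform bound (4.11) for $\omega_{p,\alpha}^k(x,f_{\varepsilon,\delta})$, and only then invokes the Calder\'on reproducing formula of Mourou--Trim\`eche to get $\Lambda_\alpha^{2i}f_{\varepsilon,\delta}\to c\,\Lambda_\alpha^{2i}f$ in $L^p(\mu_\alpha)$, accepting a nonzero constant $c$ in place of exact reproduction; the hypothesis of that theorem is verified from the moment condition $\int_0^{+\infty}\phi\,d\mu_\alpha=0$. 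Your route is viable but needs two repairs: first, the moment conditions defining $\mathcal{A}_k$ are not what make $\psi$ constructible --- with $\mathcal{F}_\alpha(\psi)$ supported in an annulus you need $\mathcal{F}_\alpha(\phi)$ bounded away from zero on that annulus, which holds for some annulus by continuity once $\phi\neq 0$, independently of the moments; second, applying $\Lambda_\alpha^{2i}$ to an identity that holds only as an $L^p$-limit requires knowing the identity also holds for $\Lambda_\alpha^{2i}f$, which is exactly what the paper's truncate-first, pass-to-the-limit-last organization delivers, and which your order of operations (prove the global identity, then differentiate under the integral) leaves to be justified.
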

\begin{proof} Put for $0<\varepsilon<\delta<+\infty$
\begin{eqnarray*}
f_{\varepsilon,\delta}(y)=\int_\varepsilon^\delta
 (\phi_t \ast_\alpha \phi_t \ast_\alpha f)(y)\frac{dt}{t}\;,\;\;\;y\in
 \mathbb{R}.\end{eqnarray*}
 Then for $i \in \mathbb{N}$, we have
\begin{eqnarray*}
\Lambda_{\alpha}^{2i}f_{\varepsilon,\delta}(y)=\int_\varepsilon^\delta
 (\Lambda_{\alpha}^{2i}\phi_t \ast_\alpha  \phi_t \ast_\alpha f)(y)\frac{dt}{t}\;,\;\;\;y\in \mathbb{R}.\end{eqnarray*}
From the integral representation of $\tau_x,$ we obtain by
interchanging the orders of integration and (2.7),
   \begin{eqnarray*}\tau_x(f_{\varepsilon,\delta})(y)&=&\int_\varepsilon^\delta
 \tau_x(\phi_t \ast_\alpha \phi_t \ast_\alpha f)(y)\frac{dt}{t}\\ &=&\int_\varepsilon^\delta
 (\tau_x (\phi_t) \ast_\alpha \phi_t \ast_\alpha f)(y)\frac{dt}{t} \;,\;\;y\in \mathbb{R},\;x\in(0,+\infty),\end{eqnarray*} so we
 can write for $x\in(0,+\infty)$ and $y\in \mathbb{R}$,\\
  $(R_{k}(x,f_{\varepsilon,\delta})+R_{k}(-x,f_{\varepsilon,\delta}))(y)$\begin{eqnarray*} =
 \displaystyle \int_\varepsilon^\delta
 \big[\big(\tau_x(\phi_t)+\tau_{-x}(\phi_t)-2 \sum_{i=0}^{[\frac{k-1}{2}]} b_{2i}(x)
 \Lambda_{\alpha}^{2i}\phi_t\big) \ast_\alpha \phi_t \ast_\alpha f\big](y)\frac{dt}{t}\,.\end{eqnarray*}
 Using Minkowski's inequality for integrals and (2.6), we get\\
$\|(R_{k}(x,f_{\varepsilon,\delta})+R_{k}(-x,f_{\varepsilon,\delta}))\|_{p,\alpha}$
 \begin{eqnarray}
 &\leq&\int_\varepsilon^\delta
 \|(\tau_x(\phi_t)+\tau_{-x}(\phi_t)-2 \displaystyle\sum_{i=0}^{[\frac{k-1}{2}]} b_{2i}(x)\Lambda_{\alpha}^{2i}\phi_t)
  \ast_\alpha \phi_t \ast_\alpha f\|_{p,\alpha}\frac{dt}{t}\nonumber \\&\leq& c\int_\varepsilon^\delta
 \| (\tau_x(\phi_t)+\tau_{-x}(\phi_t)-2 \displaystyle\sum_{i=0}^{[\frac{k-1}{2}]} b_{2i}(x)\Lambda_{\alpha}^{2i}\phi_t) \|_{1,\alpha}
 \|\phi_t \ast_\alpha f\|_{p,\alpha}\frac{dt}{t}\nonumber\\
 &=& c\int_\varepsilon^\delta
\|R_{k}(x,\phi_{t})+R_{k}(-x,\phi_{t})\|_{1,\alpha}
 \|\phi_t \ast_\alpha f\|_{p,\alpha}\frac{dt}{t}\;.\end{eqnarray}
 For $x,\;t\in (0,+\infty)$, we have \\$ \|R_{k}(x,\phi_{t})+R_{k}(-x,\phi_{t})\|_{1,\alpha}$
 \begin{eqnarray*} &=&\| \tau_x(\phi_t)+\tau_{-x}(\phi_t)-2
 \sum_{i=0}^{[\frac{k-1}{2}]} b_{2i}(x)\Lambda_{\alpha}^{2i}\phi_t
 \|_{1,\alpha}\nonumber\\
 &=&\int_{\mathbb{R}}\Big|\Big(\int_{\mathbb{R}}\phi_t(z)\big(d\gamma_{x,y}(z)+d\gamma_{-x,y}(z)\big)\Big)-2 \sum_{i=0}^{[\frac{k-1}{2}]}
 b_{2i}(x)\Lambda_{\alpha}^{2i}\phi_t(y)\Big|d\mu_\alpha(y)\nonumber\\
 &=&\int_{\mathbb{R}}\Big|\Big(\int_{\mathbb{R}}\phi\big( \frac{z}{t}\big)\big(d\gamma_{x,y}(z)+d\gamma_{-x,y}(z)\big)\Big)-2\sum_{i=0}^{[\frac{k-1}{2}]} b_{2i}\big(\frac{x}{t}\big)\Lambda_{\alpha}^{2i}\phi\big(
 \frac{y}{t}\big)\Big|\frac{1}{t^{2(\alpha+1)}}d\mu_\alpha(y)\;.\end{eqnarray*}
 By (2.2) and the change of variable $z'=\frac{z}{t}$, we have
 $$ W_\alpha(x,y,z't)\;t^{2(\alpha+1)}=W_\alpha( \frac{x}{t}, \frac{y}{t},z'),$$
 which implies that
  $\displaystyle d\gamma_{x,y}(z)=d\gamma_{\frac{x}{t},\frac{y}{t}}(z')\,.$
  Hence, we obtain\\  $ \|R_{k}(x,\phi_{t})+R_{k}(-x,\phi_{t})\|_{1,\alpha}$
   \begin{eqnarray*}
   &=&\int_{\mathbb{R}}\Big|\Big(\int_{\mathbb{R}}\phi( z')\big(d\gamma_{\frac{x}{t},\frac{y}{t}}(z')
  +d\gamma_{\frac{-x}{t},\frac{y}{t}}(z')\big)\Big)-2\sum_{i=0}^{[\frac{k-1}{2}]} b_{2i}\big(\frac{x}{t}\big)
  \Lambda_{\alpha}^{2i}\phi \big(\frac{y}{t}\big)\Big|\frac{1}{t^{2(\alpha+1)}}d\mu_\alpha(y)
 \\ &=&\int_\mathbb{R}\Big|\Big(\tau_\frac{x}{t}(\phi)\big(\frac{y}{t}\big)+\tau_\frac{-x}{t}(\phi)\big(\frac{y}{t}\big)
 \Big)
 \frac{1}{t^{2(\alpha+1)}}
  -2\Big(\sum_{i=0}^{[\frac{k-1}{2}]} b_{2i}\big(\frac{x}{t}\big)\Lambda_{\alpha}^{2i}\phi \Big)_t(y) \Big|d\mu_\alpha(y)
 \\
  &=& \Big\|\Big(\tau_\frac{x}{t}(\phi) +\tau_\frac{-x}{t}(\phi)
     -2\sum_{i=0}^{[\frac{k-1}{2}]}
     b_{2i}\big(\frac{x}{t}\big)\Lambda_{\alpha}^{2i}\phi \Big)_{t}
   \Big\|_{1,\alpha} \\&=& \Big\| \tau_\frac{x}{t}(\phi) +\tau_\frac{-x}{t}(\phi)
     -2 \sum_{i=0}^{[\frac{k-1}{2}]} b_{2i}\big(\frac{x}{t}\big)\Lambda_{\alpha}^{2i}\phi
   \Big\|_{1,\alpha},\end{eqnarray*} which gives
   \begin{eqnarray}\|R_{k}(x,\phi_{t})+ R_{k}(-x,\phi_{t})\|_{1,\alpha}= \Big\|
R_{k}(\frac{x}{t},\phi)+R_{k}(\frac{-x}{t},\phi)
 \Big\|_{1,\alpha}. \end{eqnarray}
  Since $\phi \in
\mathcal{S}_\ast(\mathbb{R})$, then using (2.10) and (3.3), we can
assert that
$$\Big\| R_{k}(\frac{x}{t},\phi)+R_{k}(\frac{-x}{t},\phi)
 \Big\|_{1,\alpha} \leq
c\;\big(\frac{x}{t}\big)^{k}\|\Lambda_{\alpha}^{k}\phi\|_{1,\alpha}\leq
c\;\big(\frac{x}{t}\big)^{k},$$
    on the other hand, by (3.4) we have $$\Big\|
R_{k}(\frac{x}{t},\phi)+R_{k}(\frac{-x}{t},\phi)
 \Big\|_{1,\alpha} \leq c\;\big(\frac{x}{t}\big)^{k-1}\|\Lambda_{\alpha}^{k-1}\phi\|_{1,\alpha}\leq
c\;\big(\frac{x}{t}\big)^{k-1}, $$
     then we get,\begin{eqnarray}\Big\| R_{k}(\frac{x}{t},\phi)+R_{k}(\frac{-x}{t},\phi) \Big\|_{1,\alpha}\leq c\;\min \Big\{\big(\frac{x}{t}\big)^{k-1},\big(\frac{x}{t}\big)^{k}\Big\}.\qquad\quad \;\end{eqnarray}
     From (3.6), (4.8), (4.9) and (4.10), we obtain
 \begin{eqnarray}
 \omega_{p,\alpha}^k(x,f_{\varepsilon,\delta})
  \leq c \int_\varepsilon^{\delta}
\min\Big\{\big(\frac{x}{t}\big)^{k-1},\big(\frac{x}{t}\big)^{k}\Big\}\|\phi_t
\ast_\alpha f\|_{p,\alpha} \frac{dt}{t}\;.\end{eqnarray}
 Note that $\Lambda_{\alpha}^{2i}\phi\ast_\alpha\phi\in\mathcal{S_*}( \mathbb{R}).$ By
 (2.1) and
 (2.7), we have
\begin{eqnarray*}\int_\mathbb{R}(\Lambda_{\alpha}^{2i}\phi\ast_\alpha
\phi)(x)|x|^{2\alpha+1}dx&=&2^{\alpha+1}
\Gamma(\alpha+1)\mathcal{F}_\alpha(\Lambda_{\alpha}^{2i}\phi\ast_\alpha
\phi)(0)\\&=&2^{\alpha+1}
\Gamma(\alpha+1)\mathcal{F}_\alpha(\Lambda_{\alpha}^{2i}\phi)(0)\mathcal{F}_\alpha(\phi)(0)\\&=&2^{\alpha+1}
\Gamma(\alpha+1) \mathcal{F}_\alpha(\Lambda_{\alpha}^{2i}\phi)(0)
\int_\mathbb{R} \phi(z)d\mu_\alpha(z)=0.
\end{eqnarray*} Since $\Lambda_{\alpha}^{2i}\phi\ast_\alpha\phi$ is in the Schwartz space $\mathcal{S}( \mathbb{R})$, we have $$\int_\mathbb{R}|log|x||\;|\Lambda_{\alpha}^{2i}\phi\ast_\alpha\phi(x)|\;|x|^{2\alpha+1}dx<+\infty.$$
 Then, by Calder\'on's reproducing formula related to the Dunkl operator (see \cite{mou.T}, Theorem 3), we have
 $$\lim_{\varepsilon\rightarrow0,\;\delta\rightarrow +\infty}
 \Lambda_{\alpha}^{2i}f_{\varepsilon,\delta} = c \;\Lambda_{\alpha}^{2i}f\;,\;\;\;in\;L^p(\mu_\alpha)\;,$$
 hence from (4.11), we deduce our result.
 \end{proof}
\begin{theorem} Let $0 <\beta <1$, $ k=1,2,...,$ $ 1 < p < +\infty $ and $ 1 \leq q \leq +\infty$, then we have
 \begin{eqnarray*}
 \mathcal{B}^k\mathcal{D}_{p,q}^{\beta,\alpha}\cap L^p(\mu_\alpha)=
   \mathcal{C}_{p,q,\phi}^{k,\beta,\alpha},\end{eqnarray*} and for $p=1$,
   we have only $\mathcal{B}^k\mathcal{D}_{1,q}^{\beta,\alpha}\cap L^p(\mu_\alpha)\subset
   \mathcal{C}_{1,q,\phi}^{k,\beta,\alpha}.$
\end{theorem}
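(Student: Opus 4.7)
My plan is to prove the two inclusions separately, using the two convolution/modulus estimates already prepared in Lemmas 4.1 and 4.2. The direct inclusion will come from Lemma 4.1, while the reverse inclusion will rest on Lemma 4.2 and is the reason the equality only holds for $1<p<+\infty$; indeed, Lemma 4.2 invokes the Calder\'on-type reproducing formula whose $L^p(\mu_\alpha)$-convergence is available precisely in that range.

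For the direct inclusion $\mathcal{B}^k\mathcal{D}_{p,q}^{\beta,\alpha}\cap L^p(\mu_\alpha)\subset\mathcal{C}_{p,q,\phi}^{k,\beta,\alpha}$, I would take $f$ in the left-hand side, fix any $r>\beta+k-1$, and start from (4.2). After the substitution $s=x/t$ and division by $t^{\beta+k-1}$, the estimate reads
\begin{equation*}
\frac{\|\phi_t\ast_\alpha f\|_{p,\alpha}}{t^{\beta+k-1}}\leq c\int_0^{+\infty}\min\bigl\{s^{2(\alpha+1)},s^{-r}\bigr\}\,s^{\beta+k-1}\,\frac{\omega_{p,\alpha}^k(ts,f)}{(ts)^{\beta+k-1}}\,\frac{ds}{s}.
\end{equation*}
Minkowski's inequality for integrals in $L^q(dt/t)$, followed by the scale-invariant change of variables $u=ts$ inside the inner $L^q$-norm, factorises the right-hand side as the Besov-Dunkl seminorm of $f$ times the one-dimensional integral $\int_0^{+\infty}\min\{s^{2(\alpha+1)},s^{-r}\}\,s^{\beta+k-2}\,ds$, which is finite because $2(\alpha+1)+\beta+k-1>0$ controls the behaviour near $0$ and $r>\beta+k-1$ controls the behaviour near $+\infty$. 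The case $q=+\infty$ is treated analogously with essential suprema.

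For the reverse inclusion when $1<p<+\infty$, I would start from (4.7) in Lemma 4.2, perform the substitution $u=x/t$, divide by $x^{\beta+k-1}$, and apply Minkowski's inequality for integrals in $L^q(dx/x)$. The scale-invariant change of variable $t=x/u$ in the inner norm produces the $\mathcal{C}_{p,q,\phi}^{k,\beta,\alpha}$ seminorm of $f$, multiplied by
\begin{equation*}
\int_0^{+\infty}\min\bigl\{u^{k-1},u^k\bigr\}\,u^{-(\beta+k-1)}\,\frac{du}{u}=\int_0^1 u^{-\beta}\,du+\int_1^{+\infty}u^{-\beta-1}\,du,
\end{equation*}
which is finite exactly under the hypothesis $0<\beta<1$; this sharpness in $\beta$ is the reason the conditions $\beta>0$ and $\beta<1$ are both used. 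The required $L^p$-integrability of the intermediate $\Lambda_\alpha^{2i}f$ that appears in the definition of $\mathcal{C}_{p,q,\phi}^{k,\beta,\alpha}$ is inherited from the Calder\'on-type identity invoked at the end of Lemma 4.2.

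The main obstacle is the reverse inclusion, because it ultimately depends on the $L^p(\mu_\alpha)$-convergence of the Calder\'on reproducing formula embedded in Lemma 4.2; this mechanism fails at $p=1$ and is precisely why only the forward inclusion is asserted in that case. All the remaining steps are standard scale-invariant estimates: one applies Minkowski's inequality for integrals and then checks elementary integrability of the kernels $\min\{s^{2(\alpha+1)},s^{-r}\}$ and $\min\{u^{k-1},u^k\}$ against the appropriate power weights.
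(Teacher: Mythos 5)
Your proposal is correct and follows essentially the same route as the paper: both inclusions rest on the convolution estimates (4.2) and (4.7) of Lemmas 4.1 and 4.2, and the restriction to $1<p<+\infty$ for the reverse inclusion comes, exactly as you say, from the Calder\'on reproducing formula invoked at the end of Lemma 4.2. The only real difference is in how the scale variable is integrated out: you use a single application of Minkowski's integral inequality together with the dilation invariance of $dt/t$ (checking integrability of $\min\{s^{2(\alpha+1)},s^{-r}\}s^{\beta+k-1}$ and $\min\{u^{k-1},u^{k}\}u^{-(\beta+k-1)}$ against $ds/s$), whereas the paper treats $q=1$, $1<q<+\infty$ and $q=+\infty$ separately via Fubini, a H\"older/Schur-test splitting of the kernels $L(x,t)$ and $G(x,t)$, and a direct supremum bound; the two arguments are equivalent and yours is arguably more unified.
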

\begin{proof} Assume $f\in
\mathcal{B}^k\mathcal{D}_{p,q}^{\beta,\alpha}\cap L^p(\mu_\alpha)$ for
$ 1 \leq p < +\infty $, $ 1 \leq q \leq +\infty$ and $r>\beta+k-1$.\\

$\bullet$ Case $q=1$. By (4.2) and Fubini's theorem, we have
\\$\displaystyle{\int_0^{+\infty}\frac{\|f \ast_\alpha \phi_t
\|_{p,\alpha}}{t^{\beta+k-1}}\frac{dt}{t}}$
\begin{eqnarray*}
  &\leq& c \int_0^{+\infty}\int_0^{+\infty}\min\Big\{\Big(\frac{x}{t}\Big)^{2(\alpha+1)},
\Big(\frac{t}{x}\Big)^{r}\Big\}\omega_{p,\alpha}^k(x,f)t^{-\beta-k}dt\frac{dx}{x}\\
 &\leq& c \int_0^{+\infty}\omega_{p,\alpha}^k(x,f)\Big(\int_0^{+\infty}
 \min\Big\{\Big(\frac{x}{t}\Big)^{2(\alpha+1)},
\Big(\frac{t}{x}\Big)^{r}\Big\}t^{-\beta-k}dt\Big)\frac{dx}{x}\\
 &\leq& c \int_0^{+\infty}\omega_{p,\alpha}^k(x,f)\Big(x^{-r}\int_0^xt^{r-\beta-k}dt+x^{2(\alpha+1)}
 \int_x^{+\infty}t^{-\beta-k-2\alpha-2}dt\Big)\frac{dx}{x}\\
  &\leq&c \int_0^{+\infty}\frac{\omega_{p,\alpha}^k(x,f)}{x^{\beta+k-1}}\frac{dx}{x}<+\infty,\end{eqnarray*}hence
 $f\in \mathcal{C}_{p,1,\phi}^{k,\beta,\alpha}$.

$\bullet$ Case $q=+\infty$. By (4.2), we have\\$\|\phi_t \ast_\alpha
f\|_{p,\alpha}$
\begin{eqnarray*}  & \leq & c\;
\Big(\int_0^{t}\Big(\frac{x}{t}\Big)^{2(\alpha+1)}\omega_{p,\alpha}^k(x,f)\frac{dx}{x}
+\int_t^{+\infty}\Big(\frac{t}{x}\Big)^{r}\omega_{p,\alpha}^k(x,f)\frac{dx}{x}\Big)
\\&\leq& c
\sup_{x\in(0,+\infty)}\frac{\omega_{p,\alpha}^k(x,f)}{x^{\beta+k-1}}
\Big(t^{-2(\alpha+1)}\int_0^tx^{2\alpha+\beta+k}dx
+t^r\int_t^{+\infty}x^{\beta+k-r-2}dx\Big)\\&\leq&
c\;t^{\beta+k-1}\sup_{x\in(0,+\infty)}\frac{\omega_{p,\alpha}^k(x,f)}{x^{\beta+k-1}},\end{eqnarray*}
then we deduce that $f\in\mathcal{C}_{p,\infty,\phi}^{k,\beta,\alpha}$.

$\bullet$ Case $1<q<+\infty$. By (4.2) again, we have for $t>0$
$$\frac{\|\phi_t \ast_\alpha f\|_{p,\alpha}}{t^{\beta+k-1}}  \leq c
\int_0^{+\infty}\Big(\frac{x}{t}\Big)^{\beta+k-1}
\min\Big\{\Big(\frac{x}{t}\Big)^{2(\alpha+1)},
\Big(\frac{t}{x}\Big)^{r}\Big\}\frac{\omega_{p,\alpha}^k(x,f)}{x^{\beta+k-1}}\frac{dx}{x}
\;.$$ Put $\displaystyle{L(x,t)=\Big(\frac{x}{t}\Big)^{\beta+k-1}
\min\Big\{\Big(\frac{x}{t}\Big)^{2(\alpha+1)},
\Big(\frac{t}{x}\Big)^{r}\Big\}}$ and
 $\displaystyle{q'=\frac{q}{q-1}}$ the conjugate of $q$. Since $$\displaystyle{\int_0^{+\infty} L(x,t) \frac{dx}{x}}
 =t^{-\beta-k-2\alpha-1}\int_0^t
x^{\beta+k+2\alpha}dx+
 t^{-\beta-k+r+1}\int_t^{+\infty} x^{\beta+k-r-2}dx\leq c,$$ we can write using H\"older's inequality,
\begin{eqnarray*} \frac{\|\phi_t \ast_\alpha
f\|_{p,\alpha}}{t^{\beta+k-1}} &\leq&c
\int_0^{+\infty}(L(x,t))^{\frac{1}{q'}}\Big((L(x,t))^{\frac{1}{q}}\frac{\omega_{p,\alpha}^k(x,f)}{x^{\beta+k-1}}\Big)\frac{dx}{x}
\\&\leq&
c\;\Big(\int_0^{+\infty}L(x,t)\Big(\frac{\omega_{p,\alpha}^k(x,f)}{x^{\beta+k-1}}\Big)^q\frac{dx}{x}\Big)^{\frac{1}{q}}.\end{eqnarray*}
By the fact that $$ \int_0^{+\infty}L(x,t)\frac{dt}{t} =
x^{\beta+k-r-1}\int_0^x t^{-\beta-k+r}dt+
 x^{\beta+k+2\alpha+1}\int_x^{+\infty} t^{-\beta-k-2\alpha-2}dt\leq c,$$ we get using Fubini's theorem,
\begin{eqnarray*} \int_0^{+\infty}\Big(\frac{\|\phi_t \ast_\alpha
f\|_{p,\alpha}}{t^{\beta+k-1}}\Big)^q \frac{dt}{t}
&\leq& c \int_0^{+\infty}\Big(\frac{\omega_{p,\alpha}^k(x,f)}{x^{\beta+k-1}}\Big)^q\Big(\int_0^{+\infty}L(x,t)\frac{dt}{t}\Big)\frac{dx}{x}\\
&\leq& c
\int_0^{+\infty}\Big(\frac{\omega_{p,\alpha}^k(x,f)}{x^{\beta+k-1}}\Big)^q
\frac{dx}{x}< +\infty ,
\end{eqnarray*} which proves the result.\\
Assume now $f\in\mathcal{C}_{p,q,\phi}^{k,\beta,\alpha}$ for $1<
p<+\infty$ and $1\leq q\leq+\infty.$

$\bullet$ Case $q=1$. By (4.7) and Fubini's theorem, we
have\\$\displaystyle\int_0^{+\infty}\frac{\omega_p^\alpha(f)(x)}{x^{\beta+k-1}}\frac{dx}{x}$
\begin{eqnarray*}
 &\leq & c \int_0^{+\infty}\int_0^{+\infty}
\min\Big\{\big(\frac{x}{t}\big)^{k-1},\big(\frac{x}{t}\big)^{k}\Big\}\|\phi_t
\ast_\alpha f\|_{p,\alpha}x^{-\beta-k} \frac{dt}{t}dx\\&\leq&
c\int_0^{+\infty}\|\phi_t \ast_\alpha
f\|_{p,\alpha}\Big(\int_0^{+\infty}
\min\Big\{\big(\frac{x}{t}\big)^{k-1},\big(\frac{x}{t}\big)^{k}\Big\}x^{-\beta-k}dx\Big)
\frac{dt}{t}\\&\leq& c\int_0^{+\infty}\|\phi_t \ast_\alpha
f\|_{p,\alpha}\Big(\frac{1}{t^k}\int_0^t
x^{-\beta}dx+\frac{1}{t^{k-1}}\int_t^{+\infty}
x^{-\beta-1}dx\Big)\frac{dt}{t}\\&\leq& c
\int_0^{+\infty}\frac{\|\phi_t \ast_\alpha
f\|_{p,\alpha}}{t^{\beta+k-1}}\frac{dt}{t}<+\infty,\end{eqnarray*}
then we obtain the result.

$\bullet$ Case $q=+\infty$. By (4.7), we get
\begin{eqnarray*} \omega_p^\alpha(f)(x) &\leq& c\;
\Big(\int_0^{x}\big(\frac{x}{t}\big)^{k-1}\|\phi_t \ast_\alpha
f\|_{p,\alpha}\frac{dt}{t}+
 \int_x^{+\infty} \big(\frac{x}{t}\big)^{k} \|\phi_t \ast_\alpha f\|_{p,\alpha}
\frac{dt}{t}\Big)\\&\leq& c \sup_{t\in(0,+\infty)}\frac{\|\phi_t
\ast_\alpha f\|_{p,\alpha}}{t^{\beta+k-1}}
\Big(x^{k-1}\int_0^{x}t^{\beta-1}dt + x^k
\int_x^{+\infty}t^{\beta-2} dt \Big)\\&
 \leq& c \;x^{\beta+k-1} \sup_{t\in(0,+\infty)}\frac{\|\phi_t \ast_\alpha f\|_{p,\alpha}}
{t^{\beta+k-1}},\end{eqnarray*} so, we deduce that
$f\in\mathcal{B}^k\mathcal{D}_{p,\infty}^{\beta,\alpha}\cap L^p(\mu_\alpha)$.

$\bullet$ Case $1<q<+\infty$. By (4.7) again, we have for $x>0$
$$  \frac{\omega_p^\alpha(f)(x)}{x^{\beta+k-1}} \leq c
\int_0^{+\infty}\Big(\frac{t}{x}\Big)^{\beta+k-1}
\min\Big\{\big(\frac{x}{t}\big)^{k-1},\big(\frac{x}{t}\big)^{k}\Big\}\frac{
\|\phi_t \ast_\alpha f\|_{p,\alpha}}{t^{\beta+k-1}}
\frac{dt}{t}\;.$$ Note that $$\Big(\frac{t}{x}\Big)^{\beta+k-1}
\min\Big\{\big(\frac{x}{t}\big)^{k-1},\big(\frac{x}{t}\big)^{k}\Big\}=\Big(\frac{t}{x}\Big)^\beta
\min\Big\{1,\frac{x}{t}\Big\}.$$ Put
$\displaystyle{G(x,t)=\Big(\frac{t}{x}\Big)^\beta
\min\Big\{1,\frac{x}{t}\Big\}}$ and
 $q'$ the conjugate of $q.$ Since
$$\displaystyle{\int_0^{+\infty} G(x,t) \frac{dt}{t}}
 =x^{-\beta}\int_0^x
t^{\beta-1}dt+
 x^{-\beta+1}\int_x^{+\infty} t^{\beta-2}dt\leq c,$$ then using H\"older's inequality, we can write
  \begin{eqnarray*}\frac{\omega_p^\alpha(f)(x)}{x^{\beta+k-1}}
  &\leq& c \int_0^{+\infty}(G(x,t))^{\frac{1}{q'}}\Big((G(x,t))^{\frac{1}{q}}
\frac{ \|\phi_t \ast_\alpha f\|_{p,\alpha}}{t^{\beta+k-1}}\Big)
\frac{dt}{t}\\&\leq& c \;\Big(\int_0^{+\infty}  G(x,t) \Big( \frac{
\|\phi_t \ast_\alpha f\|_{p,\alpha}}{t^{\beta+k-1}}\Big)^q
\frac{dt}{t}\Big)^{\frac{1}{q}}.\end{eqnarray*} By the fact that
$$\int_0^{+\infty}G(x,t)\frac{dx}{x} =
t^{\beta-1}\int_0^t x^{-\beta}dx+
 t^{\beta}\int_t^{+\infty} x^{-\beta-1}dx
 \leq c,$$  we
get using Fubini's theorem,
\begin{eqnarray*}
\int_0^{+\infty}\Big(\frac{\omega_p^\alpha(f)(x)}{x^{\beta+k-1}}\Big)^q
\frac{dx}{x}&\leq& c\int_0^{+\infty}\Big(\frac{ \|\phi_t \ast_\alpha
f\|_{p,\alpha}}{t^{\beta+k-1}}\Big)^q\Big(\int_0^{+\infty} G(x,t)
\frac{dx}{x}\Big)\frac{dt}{t}\\&\leq& c\int_0^{+\infty}\Big(\frac{
\|\phi_t \ast_\alpha f\|_{p,\alpha}}{t^{\beta+k-1}}\Big)^q
\frac{dt}{t} <+\infty,
\end{eqnarray*} thus the result is established.
\end{proof}
\begin{remark}
From theorem 4.1, we can assert that $\mathcal{C}_{p,q,\phi}^{k,\beta,\alpha}$ is independant of the specific selection of the function $\phi$ in $\mathcal{A}_{k}.$
\end{remark}

\end{document}